
\documentclass[12pt]{article}
\usepackage{amssymb}
\usepackage{graphicx}
\usepackage{latexsym}
\setlength{\parindent}{0pt}\setlength{\parskip}{6pt}

\def\part#1{\frac{\partial\phantom{q}}{\partial#1}}

\newenvironment{proof}{\begin{trivlist}\item[]{\bf Proof:} }
{\hfill $\Box$ \end{trivlist}}

\newtheorem{thm}{Theorem}

\newcommand{\lie}[1]{\mathfrak{#1}}

\def\Hom{\mathop{\rm Hom}\nolimits}

\def\tr{\mathop{\rm tr}\nolimits}

\def\ad{\mathop{\rm ad}\nolimits}

\newcommand{\R}{\mathbf{R}}
\newcommand{\C}{\mathbf{C}}

\newcommand{\PP}{{\rm P}}
\newcommand{\SB}{{\mathbf S}}

\textheight=8in
\textwidth=6in
\oddsidemargin=0.25in
\begin{document}
\title{A remark on calibrations and Lie groups}
\author{Nigel Hitchin\\[5pt]}
\maketitle

\centerline{{\it Dedicated to Blaine Lawson on the occasion of his 80th birthday}}

\begin{abstract}
We use the notion of the principal three-dimensional subgroup of a simple Lie group to identify certain special subspaces of the Lie algebra and address the question of whether these are calibrated for invariant forms on the group.
\end{abstract}

\section{Introduction}

The notion of a calibrated differential form $\varphi$, as introduced in \cite{HL}, has become very important especially in the study of Calabi-Yau, $G_2$ and $Spin(7)$-manifolds, where $\varphi$  is a covariant constant form. On the other hand, the manifolds which have most covariant constant forms, namely compact simple Lie groups $G$, have received less attention, although they are addressed in \cite{T},\cite{XL},\cite{XL1},\cite{R}. 

Recall that the cohomology of a simple Lie group $G$ of rank $\ell$ is an exterior algebra on  $\ell$  generators with harmonic representatives $\varphi_i$ of odd degree $d_i$ which are covariant constant. The Cartan 3-form $\varphi_1$ is the generator of smallest degree and Tasaki \cite{T} showed that this defines a calibration and moreover that a three-dimensional subgroup associated to the highest root is  calibrated for this form and is volume-minimizing. He also showed that the Hodge dual  $\ast \varphi_1$ calibrates the codimension $3$ subspace of non-regular elements of $G$. 

Amongst the three-dimensional subgroups there is a particularly distinguished one, the {\it principal} three-dimensional subgroup, and Kostant showed \cite{K} that under the action of this group the Lie algebra  decomposes $\lie{g}=V_1\oplus V_2\oplus \cdots \oplus V_{\ell}$ into irreducible representations of $SO(3)$ whose dimensions are precisely the degrees $d_i$ of the generators of the cohomology. The author conjectured in \cite{H} that there is an exact fit here -- that for each subspace $V_i$ there exists a corresponding generator which restricts nontrivially. To the author's knowledge this has not yet been confirmed, though there is some information in \cite{BK}. In any case, if the restriction is non-zero it opens up the possibility of more complex calibrated submanifolds.

In this paper we observe first that the function defined by $\varphi_i$ on the Grassmannian of oriented subspaces of $\lie{g}$ of dimension $d_i$ has a critical point on $V_i$. If this critical value is nonzero then any submanifold of dimension $d_i$ tangential to a conjugate of $V_i$ will be minimal \cite{R}. If the non-zero value is the maximum then $\varphi_i$ defines a calibration and any such  submanifold is volume minimizing. 

 We  then search for non-zero values by using the transitive action of groups on odd-dimensional spheres $S^{2m+1}$, and an argument initiated by X.Liu  \cite{XL}. This consists of pulling back the volume form on the sphere and averaging over the group to produce an invariant form on $G$ of  degree $2m+1$. We use the well-known list of  groups with transitive actions  to show that in each case the pull-back of the volume form restricted to a corresponding $V_i$ is non-negative and hence its average is non-zero, providing some evidence for the conjecture. The relevant degrees are 
$2n-1$ for $SO(2n)$ and $SU(n)$, $4n-1$ for $Sp(n)$, $7$ for $Spin(7)$ and $15$ for $Spin(9)$.

Finally we mention the entirely different context \cite{H} in which the conjecture arose, involving the moduli space of stable bundles on a curve $C$.
\section{Invariant forms}
Let $G$ be a compact simple Lie group. The covariant constant forms on $G$ are the bi-invariant forms and these are defined as multilinear alternating forms $\alpha$ on $\lie{g}$ by
$$\alpha(a_1,\dots, a_{2m+1})=p(a_1,[a_2,a_3],......[a_{2m},a_{2m+1}])$$
where $p$ is an adjoint-invariant  polynomial of degree $m+1$. These polynomials correspond under the Chern-Weil homomorphism to characteristic classes like Chern or Pontryagin classes and we shall often  label the invariant forms this way -- as classes of degree $2m+2$ in the cohomology $H^*(B_G)$ of the classifying space. The Killing form is a quadratic polynomial and yields the Cartan 3-form.

The irreducible representations of the three-dimensional group $SU(2)$ are symmetric powers $\SB^n$ of the standard complex 2-dimensional representation $\SB$. The space $\SB^n$ may be thought of as the action on homogeneous polynomials $p(z_1,z_2)$ of degree $n$, or more conveniently the polynomial  $p(z)=p(z_1/z_2,1)$ and is therefore of dimension $n+1$. Since $-1\in SU(2)$ acts trivially if $n$ is even, these are the irreducibles for $SO(3)$ and are real. When $n$ is odd they are quaternionic representations of $SU(2)$. 

The Clebsch-Gordon formula tells us how to decompose a tensor product: if $m\ge n$ then 
$$\SB^{m}\otimes \SB^n=\SB^{m+n}\oplus \SB^{m+n-2}\oplus \cdots \oplus \SB^{m-n}.$$
The decomposition involves contraction with the skew form on $\SB$ and it follows then that $\SB^n\otimes \SB^n=\SB^{2n}\oplus \SB^{2n-2}\oplus\cdots$ 
and the skew part $\Lambda^2 \SB^n=\SB^{2n-2}\oplus \SB^{2n-6}\oplus \cdots$. 

The generators of the cohomology $H^*(G)$ have degrees $d_i=2\lambda_i+1$ where $\lambda_i$ are the exponents of the Lie algebra. For completeness we list them:
\vskip .25cm
\noindent $A_{\ell}: 1,2,3,\dots,\ell,\quad
B_{\ell}: 1,3,5,\dots, 2\ell-1,\quad C_{\ell}: 1,3,5,\dots, 2\ell-1.$

\noindent $D_{\ell}\, (\ell\,\, \mathrm {odd}): 1,3,5,\dots, 2\ell-3,\quad  F_4: 1,5,7,11,\quad G_2: 1,5.$

\noindent $E_6: 1,4,5,7,8,11,\quad E_7: 1,5,7,9,11,13,17,\quad E_8: 1,7,11,13,17,19,23,29.$
\vskip .25cm
In this list for each group  the exponents are distinct, but for $D_{\ell}$ where $\ell$ is even the exponent $\ell-1$ occurs twice. In terms of $SO(4n)$ characteristic classes the two invariants can be taken to be  the Euler class and a Pontryagin class of the same degree. The generators are not unique, just as we can take a basis of invariant polynomials for $SU(n)$ as $\tr a^k$ ($k=2,\dots,n$) or the coefficients of $\det(\lambda-a)$. 

Kostant's theorem \cite{K} tells us that under the action of the principal three-dimensional subgroup, which is unique up to conjugation,  
$\lie{g}=V_1\oplus V_2\oplus \cdots \oplus V_{\ell}$ where $V_i\cong \SB^{2\lambda_i}$. Clearly $\lambda_1=1$ gives the Lie algebra of the subgroup. 

As an example, the irreducible representation $\SB^n$ defines a homomorphism $SU(2)\rightarrow SU(n+1)$ whose image is the principal three-dimensional subgroup and the Lie algebra  $\lie{su}(n+1)$ is isomorphic to the trace zero elements in $\Hom (\SB^n,\SB^n)\cong \SB^n\otimes \SB^n$. The Clebsch-Gordon formula gives $\SB^2\oplus \cdots\oplus \SB^{2n}$ as the decomposition  $V_1\oplus V_2\oplus \cdots \oplus V_{\ell}$.

\section{Critical points}
Given an  invariant form $\varphi_i$ of degree $d_i$ we can evaluate it on an oriented $d_i$-dimensional subspace of $\lie{g}$ to obtain a function $f_i$ on the oriented Grassmannian ${ {\widetilde Gr}}(d_i,\lie{g})$ of such subspaces.
\begin{thm} The function $f_i$ has a critical point at $[V_i]$.
\end{thm}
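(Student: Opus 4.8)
The plan is to exploit the symmetry of $f_i$ under the principal three-dimensional subgroup, which I will denote $H$ (so $H\cong SU(2)$ or $SO(3)$). Because $\varphi_i$ is bi-invariant it is, viewed in $\Lambda^{d_i}\lie{g}^{*}$, invariant under the adjoint action of $G$, and the Killing form is $\Ad$-invariant as well; hence $H$ acts on the oriented Grassmannian $\widetilde{Gr}(d_i,\lie{g})$ preserving the function $f_i$. Since $V_i$ is a subrepresentation of $\lie{g}$ and $H$ is connected, the point $[V_i]$ is fixed by this $H$-action. Consequently $df_i|_{[V_i]}$ is an $H$-invariant element of the cotangent space $T^{*}_{[V_i]}\widetilde{Gr}(d_i,\lie{g})$, and the whole problem reduces to showing this $H$-module has no nonzero invariant covector.

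To carry this out I would use the Killing form to split $\lie{g}=V_i\oplus V_i^{\perp}$, with $V_i^{\perp}=\bigoplus_{j\ne i}V_j$, together with the standard identification $T_{[V_i]}\widetilde{Gr}(d_i,\lie{g})\cong\Hom(V_i,V_i^{\perp})$; explicitly the derivative of $f_i$ in the direction $A\in\Hom(V_i,V_i^{\perp})$ equals $\sum_{k}\varphi_i(e_1,\dots,Ae_k,\dots,e_{d_i})$ for an oriented orthonormal basis $e_1,\dots,e_{d_i}$ of $V_i$ (the normalisation of the frame contributes nothing to first order since $Ae_k\perp e_k$). A nonzero $H$-invariant vector in $\Hom(V_i,V_i^{\perp})$ would be a nonzero $H$-equivariant map $V_i\to V_i^{\perp}$; but $V_i\cong\SB^{2\lambda_i}$ and $V_j\cong\SB^{2\lambda_j}$, so by Schur's lemma --- equivalently by the Clebsch--Gordon formula, since $\SB^{2\lambda_i}\otimes\SB^{2\lambda_j}$ contains the trivial summand $\SB^{0}$ only when $\lambda_i=\lambda_j$ --- there is none as soon as the exponents are pairwise distinct. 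Since the trivial representation is self-dual, the dual module $\Hom(V_i,V_i^{\perp})^{*}$ then has no nonzero invariant either, so $df_i|_{[V_i]}=0$ and $[V_i]$ is a critical point of $f_i$. This settles every simple group whose exponents are distinct.

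The case $D_\ell$ with $\ell$ even, where the exponent $\ell-1$ occurs exactly twice, is where I expect the genuine difficulty. There the two copies of $\SB^{2\ell-2}$ span a single isotypic component on which the Kostant decomposition is not canonical, and the computation above now leaves one invariant tangent direction at $[V_i]$, corresponding to the infinitesimal rotation of $V_i$ inside the two-dimensional multiplicity space. The way around this should be to choose the copy $V_i$ adapted to $\varphi_i$: the copies form a compact one-parameter family, $f_i$ restricted to it is continuous, and taking $V_i$ at a critical point of this one-parameter problem --- for instance a maximum --- makes $df_i$ vanish in the rotation direction, while the Schur argument kills all the others; alternatively one can use the outer automorphism exchanging the two half-spin invariants, under which the Euler class changes sign, to pin down a distinguished splitting. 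Either way the conclusion is restored. I would also record that the argument uses only $\Ad$-invariance of $\varphi_i$, so it applies to any invariant $d_i$-form on $G$ --- a remark that will be convenient when averaging pulled-back volume forms in the next section.
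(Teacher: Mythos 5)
Your argument for the generic case --- identifying $T_{[V_i]}\widetilde{Gr}(d_i,\lie{g})$ with $\Hom(V_i,V_i^{\perp})$, noting that $df_i|_{[V_i]}$ is invariant under the principal $SU(2)$, and killing it by Schur/Clebsch--Gordan because $\SB^{2\lambda_i}$ does not recur in $V_i^{\perp}$ when the exponents are distinct --- is exactly the paper's proof, and it is correct. The divergence is in the $D_{\ell}$, $\ell$ even, case. The paper does what your second, sketched alternative hints at: it realises $\R^{4n}$ as $1\oplus\SB^{4n-2}$, takes the orientation-reversing involution $\sigma:e_0\mapsto -e_0$ commuting with $SO(3)$, and observes that the two copies of $\SB^{4n-2}$ in $\lie{so}(4n)$ are precisely the $\mp1$ eigenspaces $V$, $V'$ of $\sigma$; since the Pfaffian form satisfies $\sigma^{*}\varphi=-\varphi$ and the Pontryagin form is $\sigma$-invariant, the invariant gradient of $f'$ at $[V']$ lies in $\Hom(V',V)$ where $\sigma$ acts as $-1$ and hence vanishes (and similarly for $f$ at $[V]$, with an orientation bookkeeping step). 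Your first alternative --- maximise $f_i$ over the compact circle of irreducible copies inside the isotypic component, so that the one surviving invariant direction is also killed --- is a valid and self-contained existence argument, and arguably more elementary. What it buys less of: it produces, for the Euler-class and Pontryagin-class generators separately, two critical subspaces with no guarantee that they are complementary, i.e.\ no guarantee that they fit into a single Kostant decomposition $V_1\oplus\cdots\oplus V_{\ell}$ with each summand paired to a generator; the involution argument delivers that canonical pairing (and the extra fact that the Pfaffian form vanishes identically on $V'$), which is the structure the paper actually wants. If you intend the theorem in the stronger ``one decomposition, one form per summand'' sense, you should either carry out your outer-automorphism sketch in full or accept the weaker existence statement explicitly.
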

\begin{proof} Using the metric on the Grassmannian, the gradient of $f_i$ at $[V_i]$ is a tangent vector which, by virtue of the adjoint invariance of $\varphi_i$, is invariant under the action of $SU(2)$ which stabilizes $[V_i]$. The tangent space of the Grassmannian at $[V_i]$ is isomorphic to $\Hom(V_i,\lie{g}/V_i)$, but as we have seen, except for the case $D_{\ell}$ where $\ell$ is even, the exponents are distinct and so the irreducible $V_i$ does not occur in the decomposition of $\lie{g}/V_i$. By $SU(2)$-invariance, the homomorphism is   zero and so the gradient is zero. It therefore remains to consider the case of $SO(4n)$. 

The principal three-dimensional subgroup in $SO(4n)$ acts reducibly on $\R^{4n}$. It is the representation $1\oplus \SB^{4n-2}$ and so 
$\lie{g}\cong \Lambda^2(1\oplus \SB^{4n-2})=\SB^{4n-2}\oplus \Lambda^2(\SB^{4n-2})$. Denote by $V$ the first subspace here. Using the Clebsch-Gordon decomposition we have 
$\Lambda^2(\SB^{4n-2})=\SB^{8n-6}\oplus  \SB^{8n-10}\oplus \cdots\oplus \SB^2$
which contains a copy of  $\SB^{4n-2}$  which we call $V'$.

If $e_0, e_1,\dots $ is an orthonormal basis of $1\oplus \SB^{4n-2}$ with $e_0$ spanning the trivial component then $(e_0, e_1,\dots)\mapsto (-e_0, e_1,\dots)$ is an orientation-reversing involution $\sigma$ commuting with $SO(3)$ and acting as $-1$ on $V$  and $+1$ on $V'$. The invariant polynomial on $\lie{so}(4n)$ defined by the Pfaffian $\sqrt{\det a}$ changes sign under change of orientation so it defines an invariant form $\varphi$ such that $\sigma^*\varphi=-\varphi$, hence $\varphi$ evaluated on $V'$ is zero since $\sigma=1$ there. We therefore associate $V$ to $\varphi$ and $V'$ to $\varphi'$, defined by the Pontryagin class, and consider the corresponding functions $f,f'$. Pontryagin classes are of course orientation-independent. The function $f'$ is $\sigma$-invariant and so its gradient at $[V']$ is an invariant element of $\Hom(V',V)$, but  the action here is $-1$, so the gradient vanishes and this is a critical point. The case of $f$ is similar, taking into account the fact that $\sigma$ changes orientation on $V$. 
\end{proof}

\section{Groups acting on spheres}
\subsection{The invariant forms}
We focus now on a family of covariant constant forms which arise geometrically. If a simple group $G$ acts transitively on an 
odd-dimensional sphere then we have the projection $p:G\rightarrow S^{2m+1}=G/H$ and averaging over $G$ the pull-back $p^*\omega$ of the volume form on $S^{2m+1}$ gives an invariant $(2m+1)$-form. Since $p^*\omega$ is $H$-invariant this is equivalent to averaging over the sphere as in \cite{XL}. We know in advance that this form is non-zero for, by \cite{Kudo} (see also \cite{Mat}), the stabilizer $H$ is not homologous to zero and so the cohomology class $[p^*\omega]\ne 0$. 

The groups acting transitively on spheres are well-known, especially from their appearance as special holonomy groups. For a simple group $G$ and an odd-dimensional sphere we have:
$$SO(2n),\quad SU(n),\quad Sp(n),\quad Spin(7)\subset SO(8),\quad Spin(9)\subset SO(16).$$ 
A universal multiple of the invariant form which the averaging produces can be labelled by a characteristic class which restricts to zero in the cohomology $H^*(B_H)$ of the classifying space of the stabilizer $H$ of the action. The group $H$ stabilizes  a vector in an even-dimensional space so this is the Euler class for $SO(2n)$, the Chern class $c_n$ for $SU(n)$, the Chern class $c_{2n}$ for $Sp(n)\subset SU(2n)$. The last two examples in the list are stabilizers of a vector in the spin representation  and expressing the Euler class for the spin representation in terms of the basic weights  gives multiples of  $p_1^2-4p_2$ for $Spin(7)$ and $p_1^4-8p_1^2p_2+16p_2^2-64p_4$ in the  case of $Spin(9)$ (see also \cite{F}). 

We want to prove that the invariant form is non-zero on the component $\SB^{2m}\subset \lie{g}$, the tangent space at the identity.  As in \cite{XL}, the translate of $p^*\omega$ from a general point $g$ with $p(g)=v\in S^{2m+1}\subset \R^{2m+2}$ to the identity gives a form on the Lie algebra which, evaluated on $(a_1,\dots, a_{2m+1})$, $a_i\in \lie{g}$, 
 is $\det(v, a_1v, a_2v,\dots, a_{2m+1}v)$. If $(a_1,\dots, a_{2m+1})$ forms a basis for $\SB^{2m}$ and this is nonnegative and not identically zero for all $v$ in the sphere, then the average will be positive and the invariant form will be nonzero. We proceed to consider the different cases.
 
  \subsection{The case $SO(2n)$}\label{SO}
As noted above, the principal 3-dimensional subgroup in this case arises from a reducible representation $1\oplus \SB^{2n-2}$ and the subspace $V_i\subset \lie{so}(2n)$ of dimension $2n-1$ is spanned by $a_i=e_0\otimes e_i-e_i\otimes e_0$ for $1\le i\le 2n-1$. Then 
$a_i(v)=v_ie_0-v_0e_i$ and, since $\Vert v\Vert^2=1$, 
$$v\wedge a_1v\wedge \cdots \wedge a_{2n-1}v=v_0^{2n-2}e_0\wedge e_1\wedge \cdots\wedge e_{2n-1}.$$
This is non-negative hence the average is non-zero.

This formula is Example 3.7 in \cite{XL}, where Lemma 3.5 in that paper shows that in $\lie{so}(2n)$ for general $a_i$
\begin{equation}
\det(v, a_1v, a_2v,\dots, a_{2n-1}v)=\Vert v \Vert^2 Q_{2n-2}(v)
\label{Q}
\end{equation} 
where $Q_{2n-2}(v)$ is homogeneous in $v$ of degree $2n-2$. In our situation  where $a_1,\dots, a_{2n-1}$ span one of the spaces $V_i$, this will be an invariant of the $SU(2)$ action on $\R^{2n}$ and the focus of our attention in the other cases. 
\subsection{The case $SU(n)$}\label{SU}
Here the principal three-dimensional subgroup is the action of $SU(2)$ in its irreducible representation $\SB^{n-1}$, and so its image in $SU(n)$ is a copy of $SU(2)$ for $n$ even and $SO(3)$ for $n$ odd. The $2n-1$-dimensional subspace $V_i$ is  $\SB^{2n-2}$ and so we have an inclusion 
$$\SB^{2n-2}\subset \Hom(\SB^{n-1},\SB^{n-1})\cong \SB^{n-1}\otimes \SB^{n-1}$$
and we can recognize this from the Clebsch-Gordon formula. 

In terms of polynomials $p(z)$ it is the adjoint of the multiplication map, but a more convenient description is to identify $\SB^m$ with $H^0(\PP^1,{\mathcal O}(m))$, holomorphic sections of the line bundle of degree $m$ on the projective line.  Since each $\SB^m$ has either a nondegenerate skew or symmetric form we also have an invariant identification $\SB^m\cong H^1(\PP^1,{\mathcal O}(-m-2))$ by Serre duality. Then we have a natural tensor product map 
$$H^1(\PP^1,{\mathcal O}(-2n))\otimes H^0(\PP^1,{\mathcal O}(n-1))\rightarrow H^1(\PP^1,{\mathcal O}(-n-1))\cong H^0(\PP^1,{\mathcal O}(n-1))$$
which realizes the map $\SB^{2n-2}\otimes \SB^{n-1}\rightarrow \SB^{n-1}$. This is the action of $V_i\subset \lie{su}(n)$ on $\C^n$.

Consider first the case where $n=2m+1$ is odd, then $\SB^{n-1}=\SB^{2m}$ is even and has a real structure and so we can write a complex vector $v=v_1+iv_2$ where $v_1,v_2$ are real. Of course $SU(n)$ does not preserve the real structure, only the three-dimensional subgroup does. Now $\SB^{4m}\subset \SB^{2m}\otimes \SB^{2m}$ is symmetric and real  and elements of $V_i\subset \lie{su}(2m+1)$ are of the form $iA$ for a real symmetric matrix $A$. 

As in equation \ref{Q}, we are concerned with the expression $v\wedge a_1v\wedge \cdots \wedge a_{2n-1}v$ considering $\C^n$ as a real vector space where the $a_j$ lie in $V_i$. This vanishes when some linear combination of the $a_i$ has $v$ as a real eigenvector. But the $a_i$ are skew adjoint so it can only be the zero eigenvalue.
Now  each $a\in V_i$ is of the form $iA$ for $A$ real, and so $iA(v_1+iv_2)=-Av_2+iAv_1$ and if this vanishes then $Av_1=0=Av_2$.    

Represent $A$ as an element $[\alpha]$ of $H^1(\PP^1,{\mathcal O}(-2n))$ and $v_1$ as a section $s$ of ${\mathcal O}(n-1)$ then $Av_1=0$ has an interpretation in algebraic geometry: consider the exact sequence of sheaves 
$$0\rightarrow {\mathcal O}(-2n)\stackrel{s}\rightarrow  {\mathcal O}(-n-1)\rightarrow {\mathcal O}_{D}(-n-1)\rightarrow 0$$
where $D$ is the divisor of zeros of $s$. Then the long exact cohomology sequence gives
$$0\rightarrow H^0(D, {\mathcal O}_{D}(-n-1))\stackrel{\delta}\rightarrow H^1(\PP^1, {\mathcal O}(-2n))\stackrel{s}\rightarrow H^1(\PP^1,{\mathcal O}(-n-1))\rightarrow 0$$
so that $[\alpha] s=0$ if and only if $[\alpha]=\delta t$ for a section $t$ of ${\mathcal O}(-n-1)$ on the zero-dimensional  cycle $D$. 

Let $s_1$ and $s_2$ be two sections representing $v_1,v_2$ which have a common zero $x$ then the cycles $D_1,D_2$ intersect and taking $t$ as a section of ${\mathcal O}(-n-1)$ on $x$ defines $\delta(t)=[\alpha]$ which annihilates both $s_1$ and $s_2$. Hence $[\alpha]$ represents a linear combination of $a_j$ such that $v\wedge a_1v\wedge \cdots \wedge a_{2n-1}v$ vanishes when $v=v_1+iv_2$ and $v_1,v_2$ are represented by $s_1,s_2$ which have a common zero. These are  polynomials $p_1(z),p_2(z)$ of degree $n-1$ and 
the condition for a common zero is the vanishing of the resultant 
$$R(p_1,p_2)=a_0^{n-1}b_0^{n-1}\prod_{i,j}(\lambda_i-\mu_j)=a_0^{n-1}\prod_i p_2(\lambda_i)$$
where $\lambda_i, \mu_j$ are the roots of $p_1(z)=a_0z^{n-1}+\cdots+a_{n-1}, p_2(z)=b_0z^{n-1}+\cdots+b_{n-1}$. This is a polynomial in $v=v_1+iv_2$ homogeneous of degree $2n-2$. Its vanishing implies $Q_{2n-2}$  from equation (\ref{Q}) vanishes, but these two invariant polynomials have the same degree and the  resultant is irreducible hence they are multiples of each other.

The real structure on $\SB^{n-1}$ is inherited from the quaternionic structure of $\SB$ so a real polynomial of degree $2m$ satisfies $p(-1/\bar z)=\bar z^{-2m}\overline{p(z)}$
and there is a free involution $\lambda\mapsto -1/\bar\lambda$ on the roots of $p$.  Let $\lambda_1,\dots, \lambda_m, -1/\bar \lambda_1,\dots, -1/\bar\lambda_m$ be the roots of $p_1$, then
$$ R(p_1,p_2)=a_0^{2m}\prod_{i=1}^m p_2(\lambda_i)p_2(-1/\bar\lambda_i)=(a_0\prod_{i=1}^m\bar \lambda_i^{-1})^{2m}\prod _{i=1}^m\vert p_2(\lambda_i)\vert^2.$$
Reality implies  $a_{2m}=\bar a_0$ so that the product of the roots is $\bar a_0/a_0$ and $a_0\prod_1^m\bar \lambda_i^{-1}$ is real. Hence the resultant is non-negative and averaging gives a non-zero evaluation of the form. 

When $n=2m$ is even, $\SB^{2m-1}$ has a complex symplectic structure and a quaternionic structure: an antilinear involution $J$ with $J^2=-1$. Then $\SB^{4m-2}\subset  \SB^{2m-1}\otimes \SB^{2m-1}$ is symmetric which places it in the Lie algebra of complex symplectic transformations. But it is also real and so commutes with $J$. In this case if a linear combination of the $a_i$ annihilates $v$ it annihilates $Jv$ so we again have a 2-dimensional kernel and the criterion is the vanishing of the resultant of two polynomials -- $p$ and its transform $p^*$ by $J$ where $p^*(z)=z^{2m-1}\overline{p(-1/\bar z)}$. Then the resultant  $R(p,p^*)$ is
$$(a_0\bar a_{2m-1})^{2m-1}\prod_{i,j}(\lambda_i+\bar \lambda_j^{-1})=(a_0\bar a_{2m-1})^{2m-1}\prod_i(\vert \lambda_i\vert^2+1)\prod_{i< j}\vert \lambda_i\bar \lambda_j+1\vert^2 \left(\prod _j \bar \lambda_j^{-1}\right)^{2m-1}$$
and since $\prod _j \bar \lambda_j=-\bar a_{2m-1}/\bar a_0$ this expression is non-positive. Again the average is non-zero.
\subsection{The case $Sp(n)$}
The group $Sp(n)\subset SU(2n)$ is the subgroup which commutes with a quaternionic structure $J$ and we have just observed that the appropriate $V_i$ does just that, so that it lies in the Lie algebra $\lie{sp}(n)$. The result follows from the previous section. 
 
\subsection{The case $Spin(7)$}
Here the principal three-dimensional subgroup of $Spin(7)$ projects to the principal one in $SO(7)$. This is the irreducible representation   $\SB^6$ and from the characters we deduce that the 8-dimensional spin representation is $1\oplus \SB^6$. This means that the subgroup  fixes a spinor and so lies in the stabilizer $G_2$.

 The Lie algebra of $G_2$ decomposes as $\SB^2\oplus \SB^{10}$ and $\lie{so}(7)= \SB^2\oplus \SB^6\oplus \SB^{10}$ with respect to the same 3-dimensional group. It follows that  $\SB^6$ is the orthogonal complement of $\lie{g}_2$. Translated around $Spin(7)$ this is the horizontal subspace for the fibration $p:Spin(7)\rightarrow S^7$. This is a Riemannian submersion  so  $p^*\omega$ is always non-zero on this subspace.

\subsection{The case $Spin(9)$}
The defining $9$-dimensional representation is here $\SB^8$ and, from the characters again, the 16-dimensional spin representation is $\SB^{10}\oplus \SB^4$. In the Lie algebra $\lie{so}(9)\cong \Lambda^2 \SB^8$ the $15$-dimensional component is $\SB^{14}$ and we are concerned with its action on  $\SB^{10}\oplus \SB^4$. Since $\Lambda^2(\SB^{10}\oplus \SB^4)\cong \Lambda^2(\SB^{10})\oplus (\SB^{10}\otimes \SB^4)\oplus \Lambda^2\SB^4$ there 
are  copies of $\SB^{14}$ in the first two summands and the action is a linear combination of the two. 

We consider again when a linear combination of $a_1,\dots, a_{15}\in V_i$ has a non-trivial kernel. Suppose $(p,q)\in \SB^{10}\oplus \SB^4$ are polynomials in the kernel of $a\in \SB^{14}$ then we may write this as $(Ap+Bq, -B^Tp)=0$ where   $a=(A,B)\in  \Lambda^2(\SB^{10})\oplus (\SB^{10}\otimes \SB^4)$. Now $B^T:\SB^4\rightarrow \SB^{10}$ is given by the map
$$H^1(\PP^1,{\mathcal O}(-16))\otimes H^0(\PP^1,{\mathcal O}(4))\rightarrow H^1(\PP^1,{\mathcal O}(-12))$$
as in Section \ref{SU} and $B$ by the map 
$$H^1(\PP^1,{\mathcal O}(-16))\otimes H^0(\PP^1,{\mathcal O}(10))\rightarrow H^1(\PP^1,{\mathcal O}(-6))$$
for a class $[\beta]\in H^1(\PP^1,{\mathcal O}(-16))\cong \SB^{14}$. If  $p,q$ have a common zero then there exists $[\beta]$ with $Bp=0, B^Tq=0$ represented by a class supported at a single point in $\PP^1$, the common zero. If we take this point to be $z=0$ then $[\beta]$ can be identified with  the polynomial $z^{14}\in \SB^{14}$. 

Consider now $A:\SB^{10}\rightarrow \SB^{10}$ defined by $z^{14}$. This consists of contracting in $\SB^{14}\otimes \SB^{10}$ seven pairs of terms and symmetrizing. If $p$ vanishes at $0$, contraction with $z^{14}$ vanishes also. We deduce that the vanishing of the resultant $R(p,q)$ is a condition for the existence of $a\in V_i$ which annihilates $(p,q)$. This is a polynomial in the coefficients of degree $4+10=14$. But $Q_{2n-2}(v)=Q_{14}(v)$ in (\ref{Q}) is of degree 14 and so $Q_{14}$ is a multiple of the resultant of two real polynomials $u,v$ of even degrees $4,10$. As in Section \ref{SU}, this is non-negative. 
\section{Conclusion}
We have shown that in certain degrees and certain groups there exists an invariant form which is nonvanishing on $V_i$. This is true for $V_1$ for any $G$, where of course the Cartan three-form restricts non-trivially to any three-dimensional subgroup, not just the principal one.  When $G$ has rank  $\ell=2$ we have $\lie{g}=V_1\oplus V_2$,  an orthogonal decomposition, and the Hodge star of the Cartan 3-form calibrates $V_2$ so all cases are covered. Another example is the group $SU(4)$ which acts transitively on $S^7$ and also on $S^5$ under the homomorphism $SU(4)\rightarrow SO(6)$, identifying $SU(4)$ with $Spin(6)$, so we have forms in all degrees $3,5,7$ in this case, but for higher rank the arguments in this article only relate to a restrictive number of   forms. 

\section{Polyvector fields}
We conclude with a brief discussion of the origin in \cite{H} of the conjecture that for each subspace $V_i$ there is an invariant form $\varphi_i$ on $\lie{g}$ which restricts nontrivially. The context is a Riemann surface $C$ of genus $g>1$ and the moduli space $M$ of stable holomorphic principal $G^c$-bundles $P$ on $C$ for a  complex simple Lie group $G^c$. The cotangent space at a point of $M$ is isomorphic to $H^0(C, \ad(P)\otimes K)$ where $K$ is the canonical bundle and evaluating an invariant polynomial $p$ of degree $k$ defines a holomorphic section of $K^k$ on $C$. Taking the dual of $H^0(C,K^k)$ this yields a map $H^1(C, K^{1-k})\rightarrow H^0(M, S^kT)$ which is well-known to be injective and to generate holomorphic sections of the symmetric powers $S^kT$ of the tangent bundle which commute using the Schouten-Nijenhuis bracket \cite{H2}, or equivalently define Poisson-commuting functions on the cotangent bundle $T^*M$.

If we now use an invariant \emph {alternating} form $\varphi$ of degree $d$ then evaluation  yields a section of $K^{d}$ and dually we have a map $H^1(C, K^{1-d})\rightarrow H^0(M, \Lambda^dT)$ into the space of polyvector fields on $M$ and these also Schouten-commute \cite{H}. However, whereas using the spectral curve one can see that in the symmetric case the map is injective, for the skew-symmetric case this is not apparent. Instead consider the $G^c$-bundle associated to a rank $2$ stable bundle $V$  by the principal homomorphism $SL(2,\C)\rightarrow G^c$ then we can restrict  a form $\varphi_i$ to the subspace $H^0(C,S^{2\lambda_i}V\otimes K)\subset H^0(C, \ad(P)\otimes K)$. By Riemann-Roch this has dimension $(2\lambda_i+1)(g-1)$ so 
if the conjecture held then  choosing $n=2\lambda_i+1$ holomorphic sections $s_j$ with $s_1\wedge s_2\wedge \cdots \wedge s_n$ not identically zero, we could deduce that $\varphi_i$ gives a nonzero section of $K^{d_i}$. There may of course be simpler ways of achieving this.

 \vskip 2cm
  Mathematical Institute
  
  Woodstock Road
  
  Oxford OX2 6GG
  
  UK


\begin{thebibliography}{9}

\bibitem{BK} \textsc{N. Bushek} and \textsc{S. Kumar},
Hitchin's conjecture for simply-laced Lie algebras implies that for any simple Lie algebra. \textit{Differential Geom. Appl.,} \textbf{35} (2014), 210--223.


\bibitem{F} \textsc{T. Friedrich},
Weak $Spin(9)$ structures on 16-dimensional Riemannian manifolds. \textit{Asian J. Math.,} \textbf{5} (2001), 129--160.



\bibitem{HL} \textsc{R. Harvey} and \textsc{H. B. Lawson Jr.},
Calibrated geometries. \textit{Acta Math,} \textbf{148} (1982), 47--157.



\bibitem{H2} \textsc{N. J. Hitchin}, Stable bundles and integrable systems. \textit{Duke Math J.} \textbf{54} (1987), 91--114.


\bibitem{H} \textsc{N. J. Hitchin}, Stable bundles and polyvector fields, in  \textit{Complex and Differential Geometry, W.Ebeling et al (eds), Springer Proceedings in Mathematics } \textbf{8} Springer Verlag, Heidelberg  (2011), 135--156.


\bibitem{K} \textsc{B. Kostant}, The principal three-dimensional subgroup and the Betti numbers of a complex simple Lie group,
\textit{Amer. J. Math.}  \textbf{81} (1959), 973--1032.


\bibitem{Kudo} \textsc{T. Kudo}, Homological properties of fibre bundles. \textit{J. Inst. Polytech. Osaka City Univ.} \textbf{1} (1950), 101--114.



\bibitem{XL}
\textsc{X. Liu},  Volume minimizing cycles in compact Lie groups.
\textit{Amer. J. Math} \textbf{117} (1995), 1203--1248.


\bibitem{XL1}
\textsc{X. Liu},  Rigidity of the Gauss map in compact Lie groups.
\textit{Duke Math. J.} \textbf{77} (1995), 447--481.


\bibitem{Mat}
\textsc{Y. Matsushima},   On a type of subgroups of a compact Lie group.
\textit{ Nagoya Math. J. } \textbf{2} (1951), 1--15.


\bibitem{R}
\textsc{C.Robles},  Parallel calibrations and minimal submanifolds.
\textit{ Illinois J. Math. } \textbf{56} (2012), 383--395.


\bibitem{T}
\textsc{H. Tasaki},  Certain minimal or homologically volume minimizing submanifolds in compact symmetric spaces.
\textit{ Tsukuba J. Math. } \textbf{35} (1985), 117--131.




\end{thebibliography}
\end{document}